\newcommand{\Q}{\mathbb Q}
\newcommand{\C}{\mathbb C}
\newcommand{\R}{\mathbb R}
\newcommand{\Z}{\mathbb Z}
\newcommand{\N}{\mathbb N}
\newcommand{\p}{\mathfrak p}
\newcommand{\fP}{\mathfrak P}
\newcommand{\OO}{\mathcal O}
\newcommand{\disc}{\operatorname{disc}}
\newcommand{\diff}{\operatorname{diff}}
\newcommand{\Orb}{\operatorname{Orb}\,}
\newcommand{\eps}{\varepsilon}
\newcommand{\rsp}{\raisebox{0em}[2.7ex][1.3ex]{\rule{0em}{2ex} }}
\newtheorem{prop}{Proposition}
\newtheorem{theorem}{Theorem}
\begin{document}

\title{Euclid's Algorithm in quartic CM-fields}
\author{Franz Lemmermeyer}
\address{M\"orikeweg 1, 73489 Jagstzell}
\email{hb3@ix.urz.uni-heidelberg.de}

\abstract
In this note we present techniques to compute inhomogeneous minima
of norm forms; as an application, we determine all norm-Euclidean 
complex bicyclic quartic number fields.
\endabstract

\maketitle

A number field K is said to be Euclidean (with respect 
to the norm), if for all $\xi \in K$ we can find 
$\eta \in \OO_K$ such that $|N_{K/\Q}(\xi-\eta)|<1$. 
Although it is known since the work of Davenport that there 
are only a finite number of Euclidean fields with unit rank 
$1$, only the quadratic Euclidean fields have been 
determined so far. In this paper, we will determine the 
Euclidean normal quartic CM-fields (these are totally complex quartic 
fields which contain a real quadratic subfield). According to a 
well known theorem due  to  Cassels \cite{Cas}, such fields have 
discriminants $ < 230\,202\,117$. In fact, the bound given by 
Cassels was somewhat smaller, but his computations were shown to 
contain an error by van der Linden \cite{vdL}. Using Setzer's 
solution of the class number 1 problem for complex cyclic 
quartic number fields, van der Linden was able to prove

\begin{theorem}\label{T1}
There are exactly two complex cyclic quartic fields that are
norm-Euclidean: $\Q(\zeta_{5})$ and the quartic subfield of 
$\Q(\zeta_{13})$.
\end{theorem}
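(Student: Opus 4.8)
The plan is to combine two known finiteness inputs so as to reduce Theorem~\ref{T1} to a finite list of fields, and then to settle each field by an explicit estimate for its inhomogeneous minimum. First, observe that if $\OO_K$ is norm-Euclidean then it is a Euclidean domain: the function $\xi\mapsto |N_{K/\Q}(\xi)|$ on $\OO_K\setminus\{0\}$ takes values in the positive integers and supports a division algorithm. Hence $\OO_K$ is a principal ideal domain, so $h_K=1$; and since $K$ is a CM-field, $h_{K^+}\mid h_K$ forces its real quadratic subfield to have class number $1$ as well. By Setzer's solution of the class number~$1$ problem for complex abelian quartic fields, the complex \emph{cyclic} quartic fields with $h_K=1$ form an explicit finite list $\mathcal L$ (Cassels' discriminant bound $<230\,202\,117$ already confines matters to a finite range, but Setzer's list is the sharper input). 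It therefore suffices to decide, for each $K\in\mathcal L$, whether
\[
  M(K):=\sup_{\xi\in K}\ \inf_{\eta\in\OO_K}\ |N_{K/\Q}(\xi-\eta)|<1 .
\]

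Second, I would set up the geometric picture used throughout: embedding $K$ via its two pairs of conjugate archimedean places identifies $\OO_K$ with a full lattice $\Lambda\subset\C^2\cong\R^4$ and turns $N_{K/\Q}$ into $(z_1,z_2)\mapsto |z_1|^2|z_2|^2$, so that $M(K)=\sup_{x\in\R^4}\min_{\lambda\in\Lambda}|z_1(x-\lambda)|^2\,|z_2(x-\lambda)|^2$, a quantity depending only on the shape of $\Lambda$ modulo translation. For the fields in $\mathcal L$ other than $\Q(\zeta_5)$ and the quartic subfield of $\Q(\zeta_{13})$, I expect it is enough to exhibit one ``bad'' point $\xi$ --- typically $\xi=\tfrac12\omega$ for a suitable $\omega\in\OO_K$, or another point with small denominator --- and to verify $\inf_\eta|N(\xi-\eta)|\ge 1$ by ruling out the finitely many $\eta$ that could conceivably bring the norm below $1$ (those lying in an explicit box around $\xi$). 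Each such verification is a finite, elementary computation.

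The genuine work, and the main obstacle, is proving the two \emph{positive} assertions $M(\Q(\zeta_5))<1$ and $M(K)<1$ for the conductor-$13$ field, since these are statements about every $\xi\in K$. Here the plan is to run the covering argument that the paper's techniques are designed for: tile a fundamental parallelotope of $\R^4/\Lambda$ by finitely many small boxes, on each box choose a candidate translate $\eta\in\OO_K$, and bound $|z_1(\xi-\eta)|^2|z_2(\xi-\eta)|^2$ from above over the whole box in terms of the coordinatewise distances from the box to $\eta$; any box on which the resulting bound is $\ge 1$ is subdivided and reprocessed. The procedure terminates precisely when the true maximum is strictly below $1$, and the delicate point is the behaviour near the conjectural critical points, where the crude box bound is weakest and one must either subdivide very finely or enlarge the list of competing translates $\eta$ so that the minimum over several of them drops below $1$. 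For $\Q(\zeta_5)$ this computation is classical; for the conductor-$13$ field it is exactly the kind of inhomogeneous-minimum calculation for norm forms that the methods of this note are built to carry out, and this is where the bulk of the effort lies.
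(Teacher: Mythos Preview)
Your overall framework---reduce via Setzer to the seven conductors $\{5,13,16,29,37,53,61\}$ and then decide each case---matches the paper. Where you diverge is in the mechanism for the negative cases. You propose a direct geometric attack: pick a candidate bad point $\xi$ and rule out all $\eta$ in a box. The paper instead uses its Proposition~1 (the ramified-prime criterion): for each of $p=29,37,53,61$ one chooses $\alpha,\beta\in\Z$ with $\beta\equiv\alpha^4\bmod p$ and checks that no integer $b\equiv\beta\bmod p$ with $0<b<p$ is a norm from $K$, which reduces to the single observation $(2/p)=-1$. The conductor-$16$ field is eliminated by a residue-class argument modulo~$2$ (the class $1+\sqrt{-2+\sqrt 2}\bmod 2$ contains no element of odd norm $<16$). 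This is considerably lighter than a box search: no embedding into $\R^4$, no enumeration of lattice points, just congruences and a Legendre symbol.

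One technical point in your negative-case sketch deserves attention: the region $\{(z_1,z_2):|z_1|^2|z_2|^2<1\}$ is unbounded in $\C^2$, so ``the finitely many $\eta$ lying in an explicit box around $\xi$'' is not automatic. To make the check genuinely finite you must first use the unit action to balance $|z_1(\xi-\eta)|$ against $|z_2(\xi-\eta)|$; this is the Barnes--Swinnerton-Dyer trick packaged as Proposition~2 in the paper, and it (or something equivalent) has to be invoked before any box becomes explicit.

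For the positive assertions the paper does not run the covering argument you outline; it treats the Euclideanity of $\Q(\zeta_5)$ and of the conductor-$13$ field as already established in the literature (van der Linden, Lakein). Your tiling plan would work and is the standard computational route, but it lies outside what the paper itself carries out.
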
 

He also gave bounds for $\disc \, K$  in case $K$ is a complex bicyclic 
quartic field, but did not attempt to determine them all.
Making use of ideas of Sauvageot \cite{Sau}, we will prove

\begin{theorem}\label{T2}
The Euclidean fields $\Q(\sqrt{-m},\sqrt{n}\,)$, $m \in \N, 
n \in \Z$, are given by
$$\begin{array}{rlll}
 m = & 1, &n = &2, 3, 5, 7;  \\
 m = & 2, &n = &-3, 5;           \\
 m = & 3, &n = &2, 5, -7, -11, 17, -19;\\
 m = & 7, &n = &5.
\end{array}$$
\end{theorem}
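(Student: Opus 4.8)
The plan is to reduce Theorem~\ref{T2} to finitely many fields and then, for each, to decide whether it is norm-Euclidean by controlling the inhomogeneous minimum of its norm form. A norm-Euclidean ring is a principal ideal domain, so any field $K$ in question has $h_K=1$; Cassels' bound (corrected in \cite{vdL}) makes the set of complex bicyclic quartic fields with $\disc K<230\,202\,117$ finite, and imposing $h_K=1$ cuts it down to a short explicit list $\mathcal L$ of candidates $\Q(\sqrt{-m},\sqrt n\,)$. Embedding $K\hookrightarrow K\otimes\R\cong\C^2$ through the two complex places, $\OO_K$ becomes a full lattice $\Lambda$ and $N_{K/\Q}$ extends to $N(z_1,z_2)=|z_1|^2|z_2|^2$. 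The function $\mu(z)=\inf_{\lambda\in\Lambda}|N(z-\lambda)|$ is $\Lambda$-periodic and, being unaffected by multiplication by units (which have norm $1$), invariant under $\OO_K^{\times}$; it is upper semicontinuous, so $M(K):=\max\mu$ is attained, and $K$ is norm-Euclidean iff $M(K)<1$. For each $K\in\mathcal L$ one has to decide on which side of $1$ the number $M(K)$ lies.

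\emph{The Euclidean cases.} Here I would produce a finite covering of a fundamental parallelotope for $\Lambda$ by boxes $B=\{z:|z_j-c_j|\le\delta_j\}$, attaching to each $B$ an $\eta_B\in\OO_K$ for which the crude bound
$$\sup_{z\in B}|N(z-\eta_B)|\le\prod_{j=1,2}\bigl(|c_j-\sigma_j(\eta_B)|+\delta_j\bigr)^2<1$$
is verified, which proves $M(K)<1$. To keep this feasible I would follow Sauvageot \cite{Sau}: writing $\OO_K=\OO_F\oplus\OO_F\theta$ over the ring of integers of the real quadratic subfield $F$, one chooses the two $\OO_F$-components of $\eta$ successively, turning the four-dimensional covering into coupled approximation problems inside $F$, where the norm-Euclidean algorithm of $F$ is available; the unit group and complex conjugation further shrink the region to be covered. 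For $\Q(i,\sqrt2\,)=\Q(\zeta_8)$ and $\Q(i,\sqrt3\,)=\Q(\zeta_{12})$, and for a few of the others, such coverings are classical.

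\emph{The non-Euclidean cases.} For each remaining field in $\mathcal L$ I would exhibit an explicit point $\xi_0$ with $\mu(\xi_0)\ge1$. Many fields fall immediately: if the real quadratic subfield $F$ is itself not norm-Euclidean, a point $\xi_0\in F$ witnessing this also works for $K$, because (with $\OO_K=\OO_F\oplus\OO_F\theta$, $\theta^2=d$, $\tau_id<0$) one has $|N_{K/\Q}(\xi-\eta)|\ge|N_{F/\Q}(\xi-a)|^2$ where $a$ is the $\OO_F$-component of $\eta$. For the rest I would take $\xi_0=\alpha/D$ with $\alpha\in\OO_K$, $\alpha\notin D\OO_K$ and $D$ small; then
$$\mu(\xi_0)=D^{-4}\min\bigl\{\,|N_{K/\Q}(\beta)| : \beta\equiv\alpha\pmod{D\OO_K}\,\bigr\},$$
and $\mu(\xi_0)\ge1$ is equivalent to the \emph{finite} assertion that no integral ideal of $\OO_K$ of norm $<D^4$ has a generator congruent to $\alpha$ modulo $D\OO_K$ --- checked from a list of small-norm ideals together with the image of $\OO_K^{\times}$ in $(\OO_K/D\OO_K)^{\times}$. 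Running through all of $\mathcal L$ leaves exactly the fields listed in the theorem.

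\emph{Where the difficulty lies.} The delicate part is the borderline fields, those with $M(K)$ near $1$: the box coverings must be fine enough to succeed and coarse enough to terminate, and one must be sure no box is overlooked --- Sauvageot's passage to $F$ is what makes these computations tractable by keeping them essentially two-dimensional. On the non-Euclidean side the danger is a field with $M(K)=1$ attained only at irrational points, where the ideal-counting shortcut fails and a genuine Diophantine (continued-fraction-type) argument over $F$ would be needed; part of the work is checking that, for the fields in $\mathcal L$, this does not occur.
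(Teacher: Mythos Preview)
Your plan is sound in outline but takes quite a different route from the paper, and one step needs a caveat.

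\textbf{What the paper does.} The paper never actually enumerates the Cassels list. Instead it organises the exclusion by the splitting behaviour of $2$ in $K=\Q(\sqrt m,\sqrt n)$: either $\OO_K$ contains a prime of norm $2$ (then both complex quadratic subfields must contain an element of norm $2$, forcing them to be among $\Q(\sqrt{-1}),\Q(\sqrt{-2}),\Q(\sqrt{-7})$), or $2$ is inert in some quadratic subfield, which leads to three subcases (A), (B), (C). In each subcase a short residue–class argument modulo $2$ (or modulo a ramified prime) combined with relative norms to a quadratic subfield pins the parameters down to a handful of fields. The remaining exclusions are done with two tools you do not mention: Proposition~1, a ramified-prime criterion that tests whether a prescribed residue class in a \emph{subfield} contains a relative norm of small absolute norm; and Proposition~2, a Barnes--Swinnerton-Dyer orbit bound that, for a unit $\eps$, confines any putative $\alpha$ with $|N\alpha|<\kappa$ and $\alpha\equiv\xi_j\bmod\OO_K$ to an explicit box. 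The latter is what disposes of $D(-2,-11)$, where the witnessing point has denominator $66$ and a naive search of the kind you propose would have to sift all ideals of norm below $66^4$. The positive direction is, as you say, done by machine covering.

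\textbf{What your approach buys, and what it costs.} Your scheme is more uniform and more frankly computational: list everything under Cassels' bound with $h_K=1$, then run two generic tests. This is perfectly defensible, but it trades the paper's arithmetic shortcuts (splitting of $2$, relative norms to subfields, ramified-prime obstructions) for a longer enumeration and, for the stubborn fields, larger searches. The paper's route makes the exclusion arguments almost computation-free except for the BSD box in one case.

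\textbf{A gap to watch.} Your inequality $|N_{K/\Q}(\xi-\eta)|\ge|N_{F/\Q}(\xi-a)|^2$ for $\xi\in F$ is valid only under the hypothesis you parenthesise, namely $\OO_K=\OO_F\oplus\OO_F\theta$ with $\theta$ purely imaginary. In general one only has $\mathrm{Tr}_{K/F}(\eta)/2\in\tfrac12\OO_F$, so the best one gets is an approximation of $\xi$ by half-integers of $F$; this is exactly why the paper's Proposition~4 needs the hypothesis $\diff(K/F)\equiv 0\bmod 2$, and why Proposition~3 only yields ``$F$ not Euclidean and $N_{F/\Q}\disc(K/F)\ge 16$ $\Rightarrow$ $K$ not Euclidean''. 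The paper in fact states that Propositions~3 and~4 are \emph{not used} in the proof of Theorem~2. So if you keep the real-subfield shortcut, you must either verify the relative-basis hypothesis field by field, or fall back on your method~(b) whenever it fails.
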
 

\section{Lower bounds for Euclidean minima}

We begin by fixing the notation. For an algebraic number field $K$,
$\OO_K$ denotes its ring of integers, and $E_K$ its unit 
group. For an ideal $\mathfrak a$ in $\OO_K$,   $N \mathfrak a$ will
always denote the absolute norm of $\mathfrak a$, i.e. the index 
$(\OO_K:\,\mathfrak a)$. For $\xi \in K$, put
$$M(\xi,K) = \inf \ \{|N_{K/\Q}(\xi-\alpha)|\,:\,\alpha \in \OO_{K}\};$$
$M(\xi,K)$ is called the Euclidean minimum of $K$ at $\xi$
(it can be proved that $M(\xi,K)$ is in fact a minimum; cf. 
\cite{BSD1}, \cite{BSD2} or \cite{Lem2}); obviously, $K$ is Euclidean 
if and only if $M(\xi,K)<1$ for all $\xi \in K$. Moreover,
$$M(K) = \sup \ \{M(\xi,K): \xi \in K\}$$
is called the Euclidean minimum of $K$; we know that this supremum 
is a maximum if $K$ has unit rank $\le 1$, and we conjecture that 
this holds for all number fields.

There are three simple methods that allow us to prove that a 
given field is not Euclidean: the use of ramified primes, the 
residue classes modulo ideals of small norm, and the use of 
absolute values. These techniques have been used to determine 
all quadratic Euclidean fields, and their usefulness has been 
stressed again by Cioffari \cite{Cio} in his determination of 
all pure cubic Euclidean fields.

\begin{prop}
Let $K/k$ be a finite extension of number fields of relative degree $n$, 
and suppose that the prime ideal $\p$ in $\OO_K$  is completely 
ramified in $K/k$, i.e. that $\p\OO_K =\,\mathfrak P^n$. 
If $\beta \equiv \alpha^n  \bmod \p$ for some 
$ \alpha,\beta \in\,\OO_K \setminus \p$, 
and if there do not exist $b \in \OO_K$ such that
\begin{enumerate}
\item $b \equiv \beta \,\bmod \p$;
\item $b = N_{K/k}\delta$ for some $\delta \in \OO_K$;
\item $|N_{k/\Q}\,b| < N\p$;
\end{enumerate}
then K is not Euclidean.
\end{prop}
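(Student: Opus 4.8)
The plan is to argue by contradiction: if $K$ is Euclidean, I will produce an element $b$ satisfying (1)--(3), contradicting the hypothesis. Everything rests on one elementary fact about the totally ramified extension $K/k$. Since $\fP$ is the only prime of $\OO_K$ above $\p$ and has residue degree $1$ over it, $\OO_K/\fP=\OO_k/\p=:\mathbb F_q$ with $q=N\p$, and
$$N_{K/k}(\gamma)\equiv\gamma^{\,n}\pmod{\fP}\qquad(\gamma\in\OO_K).$$
Indeed, after completing at $\fP$ the ring of integers becomes $\OO_{k,\p}[\Pi]$ for a root $\Pi$ of an Eisenstein polynomial, so modulo $\p$ the matrix of multiplication by $\gamma$ in the basis $1,\Pi,\dots,\Pi^{\,n-1}$ is lower triangular with the residue of $\gamma$ on the diagonal, and its determinant is the residue of $N_{K/k}(\gamma)$. (This is also the sense in which $\beta\equiv\alpha^{n}\bmod\p$ in the statement is to be read, namely in $\mathbb F_q$.)

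For the test element I would take a uniformizer $\Pi$ of $\fP$ with $(\Pi)=\fP$ and set $\xi=\alpha/\Pi\in K$, so that $v_\fP(\xi)=-1$ and $\xi\notin\OO_K$. Euclideanity gives $\eta\in\OO_K$ with $|N_{K/\Q}(\xi-\eta)|<1$; put $\nu=\Pi(\xi-\eta)=\alpha-\Pi\eta\in\OO_K$. Then $\nu\equiv\alpha\pmod{\fP}$ because $\Pi\eta\in\fP$, and by multiplicativity of the norm $|N_{K/\Q}(\nu)|=|N_{K/\Q}(\Pi)|\cdot|N_{K/\Q}(\xi-\eta)|<|N_{K/\Q}(\Pi)|=N\fP=N\p$. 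Now set $\delta=\nu$ and $b=N_{K/k}(\nu)\in\OO_k$. Then $b=N_{K/k}\delta$ is (2); from the displayed congruence, $\nu\equiv\alpha\pmod{\fP}$ and the hypothesis I get $b\equiv\nu^{\,n}\equiv\alpha^{\,n}\equiv\beta$ in $\mathbb F_q$, i.e.\ $b\equiv\beta\bmod\p$, which is (1); and $|N_{k/\Q}(b)|=|N_{K/\Q}(\nu)|<N\p$ by transitivity of the norm, which is (3). This is the desired contradiction.

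The hard part is not this chain of implications but the one place where I silently assumed $(\Pi)=\fP$, i.e.\ that $\fP$ is principal: that assumption is what forces the denominator of $\xi$ to have absolute norm exactly $N\p$ and so gives the clean bound in (3). In general one should instead pick $\xi\in K$ whose denominator \emph{ideal} is precisely $\fP$ (available by weak approximation) and then strip the leftover prime-to-$\fP$ ideal factor off $\xi-\eta$, all while keeping the resulting $\delta$ an algebraic integer and tracking its residue class modulo $\fP$; this ideal-theoretic bookkeeping, not any new idea, is where the real effort lies. It is also the step that uses the hypotheses $\alpha,\beta\in\OO_K\setminus\fP$ rather than merely outside $\p\OO_K$: that is what makes $\xi$ have a genuine pole at $\fP$ and $\bar\nu$ a nonzero element of $\mathbb F_q$, so that $\bar\nu^{\,n}=\bar\beta\ne0$.
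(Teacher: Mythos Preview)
Your argument is correct and follows the paper's proof essentially line for line: assume $K$ Euclidean, take $\xi=\alpha/\pi$ for a generator $\pi$ of $\fP$, apply the Euclidean property to get $\eta$, and set $b=N_{K/k}(\alpha-\eta\pi)$; your Eisenstein-basis justification of $N_{K/k}(\gamma)\equiv\gamma^n\pmod{\fP}$ is a clean alternative to the paper's passage through the normal closure of $K/k$.

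Your final paragraph, however, manufactures a difficulty that is not there. The hypothesis (for contradiction) that $K$ is norm-Euclidean already forces $\OO_K$ to be a principal ideal domain, so $\fP$ is automatically principal; the paper's opening line is simply ``Suppose that $K$ is Euclidean; then there is a $\pi\in\OO_K$ such that $\fP=\pi\OO_K$.'' No weak approximation or stripping of prime-to-$\fP$ ideal factors is required, and your first two paragraphs already constitute a complete proof.
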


\begin{proof} Suppose that $K$ is Euclidean; then there is a
$\pi \in \OO_K$ such that $\mathfrak P\,=\pi\OO_K$,
and for $\xi= \alpha/\pi$ we can find $\eta \in \OO_K$  
such that $|N_{K/\Q}(\xi-\eta)| < 1$. This implies
$|N_{K/\Q}(\alpha-\eta \pi)| < N\mathfrak P$; put 
$b= N_{K/k}(\alpha-\eta \pi)$. Then we find

\smallskip\noindent
(1) $b \equiv \beta \bmod \p$, because $\alpha-\eta \pi 
    \equiv \alpha \bmod \,\mathfrak P$ and the fact that $\p$ is 
    completely ramified in $K/k$ imply that 
    $N_{K/k}(\alpha-\eta \pi) \equiv N_{K/k}\alpha \bmod \fP$ 
    as a congruence in the normal closure of $K/k$. Since both sides 
    are $\in \OO_K$, the congruence holds $ \bmod \fP \cap \OO_K = \,\p$. 

\smallskip\noindent
(2) $b = N_{K/k} \delta$ for $\delta = \alpha - \eta \pi$ is clear; 

\smallskip\noindent
(3) $|N_{k/\Q}b| = |N_{k/\Q}N_{K/k}(\alpha - \eta \pi)|\
       = |N_{K/\Q}(\alpha- \eta \pi)| < N_{K/\Q}\mathfrak P 
       = N_{k/\Q}\,\p$. 
\end{proof}

In the special case $k=\Q$ and $\p=p\Z$, there are 
only two $b \in \Z$ satisfying (1) and (3), because
$|N_{k/\Q}\,b| = |b|$ and $|N\p| = p$. Moreover, if $K$ is 
totally complex, only positive $b \in \Z$ can be norms from $K$.

We note that we can use a modification of Proposition 1
to determine lower bounds for $M(\xi,K)$; but this will not be 
needed in the sequel. Moreover, there is an immediate generalization 
to products of pairwise different completely ramified prime ideals.

The idea behind our next result is due to Barnes and Swinnerton-Dyer
(BSD). Let $\xi=\xi_{1} \in K$ and $\eps \in E_K$
be given; it is easy to see that there is an $m \in \N$
such that $\eps^m \xi-\xi \in \OO_K$  (we will 
often write $\xi \equiv \eta \,\bmod \OO_K$  for 
$\xi-\eta \in \OO_K$). The set 
$ \Orb_\eps(\xi) = \{\xi=\xi_0, \xi_1,\ldots,\xi_{\ell-1}: 
	\xi_{j} \equiv \eps^j \xi \bmod \OO_K, 1 \le j \le \ell\}$  
of representatives $\bmod \ \OO_K$ of the $\eps^j \xi$ is 
called the {\it orbit} of $\xi$. It is clear from the definition of 
the Euclidean minimum that $M(\xi_0,K) = ... = M(\xi_{\ell-1},K)$
for all $\xi_i \in \Orb_\eps(\xi)$.

\begin{prop} 
Let $K=\Q(\sqrt{m}, \sqrt{n}\,)$ be an imaginary bicyclic number 
field, $\xi \in K$, and suppose that  
$\{\xi = \xi_0,\ldots,\xi_{\ell-1}\} = \Orb_\eps(\xi)$
for a unit $\eps \in \OO_K^\times$, where $|\eps|>1$ for some
fixed embedding $|\cdot|$ of $K$ into $\C$. If
$M(\xi,K)<\kappa$ for some $\kappa \in \R$, then there is an element
$\alpha = r_1+r_2\sqrt{m}+r_3\sqrt{n}+r_4\sqrt{mn} \in K$
with the following properties:
\begin{enumerate}
 \item $\alpha \equiv \xi_j \, \bmod \OO_K$  
        for some $0 \le j \le \ell-1$; 
 \item $|N_{K/\Q}\,\alpha| < \kappa$; 
 \item $|r_i| \le \mu_i$  for $ 1 \le i \le 4$,
       where the bounds $\mu_i$ are defined by
       $$\begin{array}{ll}
       \mu_1=\frac12 \sqrt[4\,]{\kappa}\Bigl(\sqrt{|\eps|}
        +1/\sqrt{|\eps|}\Bigr),
       & \mu_2=\mu_1/\sqrt{|m|}, \\
         \mu_3=\mu_1/\sqrt{|n|}, 
       & \mu_4=\mu_1/\sqrt{|mn|}. %
       \end{array}$$
\end{enumerate}
\end{prop}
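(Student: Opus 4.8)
The plan is to exploit the fact that the Euclidean minimum is constant along an orbit, together with the freedom to multiply by powers of $\eps$, in order to replace an arbitrary good approximant by one whose archimedean coordinates are small. First I would use the hypothesis $M(\xi,K)<\kappa$: since $M(\xi_j,K)=M(\xi,K)$ for every $\xi_j\in\Orb_\eps(\xi)$, and $M$ is a genuine minimum (as recalled in the text for unit rank $\le 1$), for each $j$ there is an $\alpha_j\in\OO_K$ with $|N_{K/\Q}(\xi_j-\alpha_j)|<\kappa$. Writing $\beta_j=\xi_j-\alpha_j$, we have $\beta_j\equiv\xi_j\bmod\OO_K$ and $|N_{K/\Q}\beta_j|<\kappa$, so conditions (1) and (2) are automatic for any of these $\beta_j$; the whole content is to show that, after choosing $j$ correctly, the coordinates of $\beta_j$ in the basis $1,\sqrt m,\sqrt n,\sqrt{mn}$ obey the bounds in (3).

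The key step is the archimedean one. Fix the embedding $|\cdot|$ with $|\eps|>1$; since $K$ is imaginary bicyclic with the three quadratic subfields, the relevant absolute values of $\beta_j$ under the embeddings of $K$ are controlled by $|N_{K/\Q}\beta_j|<\kappa$ and by the multiplicative action of $\eps$. Because the $\xi_j$ run through $\eps^j\xi\bmod\OO_K$, passing from $\beta_0$ to a suitable $\beta_j$ corresponds to multiplying (a complex lift of) $\beta$ by $\eps^j$ up to an element of $\OO_K$; choosing $j$ so that $|\eps^j\beta|$ lands in the ``fundamental annulus'' $1\le |\eps^j\beta|/(\text{something})<|\eps|$ — more precisely, so that $\sqrt{|N_{K/\Q}\beta|}/\sqrt{|\eps|}\le |\eps^j\beta|\le \sqrt{|N_{K/\Q}\beta|}\cdot\sqrt{|\eps|}$ at the distinguished complex place — forces both $|\sigma(\beta_j)|$ and its complex conjugate to be at most $\sqrt[4]{\kappa}\cdot\tfrac12(\sqrt{|\eps|}+1/\sqrt{|\eps|})$ (using $|N_{K/\Q}\beta_j|=|\sigma(\beta_j)|^2|\bar\sigma(\beta_j)|^2<\kappa$ to bound the conjugate once the first factor is pinned down, via AM–GM). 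This is exactly the BSD device, and it is the place where the hypothesis $|\eps|>1$ and the totally complex structure are used.

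Having bounded $|\sigma(\beta_j)|$ at the one complex place (hence, by conjugation, at its partner), I would recover the coordinates $r_i$ by solving the linear system: $\sigma(\beta_j)=r_1+r_2\sigma(\sqrt m)+r_3\sigma(\sqrt n)+r_4\sigma(\sqrt{mn})$ and the analogous equation at the conjugate place, plus the fact that the other pair of conjugate embeddings is obtained by flipping signs of the radicals. Taking suitable real/imaginary parts and sums/differences isolates each $r_i$ and yields $|r_1|\le\mu_1$, and then $|r_2|\le\mu_1/\sqrt{|m|}$, $|r_3|\le\mu_1/\sqrt{|n|}$, $|r_4|\le\mu_1/\sqrt{|mn|}$ because the corresponding basis element has absolute value $\sqrt{|m|}$, etc., at the embeddings. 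Setting $\alpha=\beta_j$ then gives all three conclusions. I expect the main obstacle to be the bookkeeping in the second paragraph: correctly identifying which integer power $j$ places $\eps^j\beta$ in the fundamental annulus while keeping $\beta_j$ a valid orbit representative (i.e.\ tracking the $\OO_K$-ambiguity), and making the AM–GM estimate $\tfrac12(\sqrt{|\eps|}+1/\sqrt{|\eps|})$ come out with the stated constant rather than something weaker.
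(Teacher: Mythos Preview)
Your plan is correct and matches the paper's proof: start from a single approximant $\beta=\xi-\eta$ with $|N_{K/\Q}\beta|<\kappa$, multiply by a suitable power of $\eps$ to land $|\alpha|$ in a fundamental annulus, then bound $|\alpha|+|\alpha'|$ and read off the $r_i$ from $4r_1=\alpha+\alpha'+\alpha''+\alpha'''$, etc. Two small bookkeeping points to fix when you write it out: the annulus should be $\kappa^{1/4}/\sqrt{|\eps|}\le|\alpha|<\kappa^{1/4}\sqrt{|\eps|}$ (fourth root, not square root, of the norm bound), and what you actually obtain is the \emph{sum} estimate $|\alpha|+|\alpha'|\le \kappa^{1/4}(\sqrt{|\eps|}+1/\sqrt{|\eps|})=2\mu_1$ (via the positivity of $(\kappa^{1/4}\sqrt{|\eps|}-|\alpha|)(\kappa^{1/4}\sqrt{|\eps|}-|\alpha'|)$ together with $|\alpha\alpha'|<\sqrt{\kappa}$), not a bound of $\mu_1$ on each factor separately.
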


\begin{proof} Assume that $M(\xi,K)<\kappa$; then
$|N_{K/\Q}(\xi-\eta)| < \kappa$ for some $\eta \in \OO_K$. 
Since $|\eps|>1$, we can find $n \in \N$ such that
$$\sqrt[4\,]{k} \cdot \sqrt{|\eps|\,}\,^{-1} \le
 |(\xi-\eta)\eps^n| < \sqrt[4\,]{\kappa} \cdot
 \sqrt{|\eps|}.$$
If we let $\alpha = (\xi-\eta)
 \eps^n$, $\alpha$ will satisfy conditions 1. (with 
$j \equiv n \bmod \ell$) and 2. Now
$$4|r_1| = |\alpha + \alpha'+\alpha''+\alpha'''| \le
  2|\alpha| + 2|\alpha'|,$$
where $\alpha,\,\alpha',\,\alpha'',\,\alpha'''$, are the
conjugates of $\alpha$, and where $\alpha''$ denotes the
complex conjugate of $\alpha$ (this implies $|\alpha''|=|\alpha|$).
The inequality 
\begin{align*}
0 &< (\sqrt[4\,]{\kappa} \cdot \sqrt{|\eps|}-|\alpha|)
      (\sqrt[4\,]{\kappa} \cdot \sqrt{|\eps|}-|\alpha'|) \\
  &= \sqrt{\kappa} \cdot |\eps| - \sqrt[4\,]{\kappa} \cdot 
     \sqrt{|\eps|}\,(|\alpha|+|\alpha'|) +|\alpha \alpha'|,
\end{align*}
together with $|\alpha \alpha'|^2 = N_{k/\Q} \alpha < \kappa $ 
yields $4|r_1| < 4\mu_1$. Similarly, we get
$$ 4|r_2\sqrt{m}| = |\alpha - \alpha' + \alpha'' - \alpha'''| 
   \le 2|\alpha| + 2|\alpha'| < 4\mu_1 \text{ etc.,}$$
if we assume that $\sqrt{m}$ is fixed by complex conjugation, 
i.e. that $m>0$. In case $m<0$, we have to switch some signs in 
$|\alpha - \alpha' + \alpha'' - \alpha'''|$, but this does not change
the resulting bound.
\end{proof}

Propositions \ref{P3} and \ref{P4} below will not be 
needed for the proofs of Theorems 1 and 2; they are included 
because they might turn out to be useful in the determination 
of Euclidean CM-fields of higher degree.

\begin{prop}\label{P3}
Let $L$ be a CM-field with maximal real subfield $K$; 
if $L$ is norm-Euclidean, but $K$ is not, then 
$N_{K/\Q}\, \disc(L/K) < 4^{(K:\Q)}.$ 
\end{prop}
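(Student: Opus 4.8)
The plan is to pit the failure of $K$ against the Euclidean property of $L$ by feeding one bad point of $K$ into the Euclidean inequality for $L$, and then to bound the ``denominator'' that appears by means of the relative different. Since $L$ is a CM-field with maximal real subfield $K$, the extension $L/K$ is quadratic and totally imaginary, so $L=K(\sqrt d)$ with $d\in\OO_K$ totally negative; every element of $L$ is uniquely $u+v\sqrt d$ with $u,v\in K$, and for $\xi\in K$ and $\eta=u+v\sqrt d$ a direct computation gives $N_{L/\Q}(\xi-\eta)=N_{K/\Q}\big((\xi-u)^2-dv^2\big)$. Because $d$ is totally negative, every conjugate of $(\xi-u)^2-dv^2$ equals $(\sigma\xi-\sigma u)^2+|\sigma d|\,(\sigma v)^2\ge 0$, where $\sigma$ runs over the real embeddings of $K$.

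Now I would assume that $L$ is Euclidean while $K$ is not, and fix (using that $K$ is not Euclidean) a point $\xi\in K$ with $|N_{K/\Q}(\xi-\alpha)|\ge 1$ for all $\alpha\in\OO_K$. Viewing $\xi$ in $L$, the Euclidean hypothesis for $L$ yields $\eta=u+v\sqrt d\in\OO_L$ with $N_{K/\Q}\big((\xi-u)^2-dv^2\big)<1$. If $v=0$ then $\eta\in\OO_L\cap K=\OO_K$ and the left-hand side is $N_{K/\Q}(\xi-u)^2\ge1$, a contradiction; hence $v\ne0$, so $\sigma v\ne0$ for every real embedding $\sigma$ of $K$, and discarding the nonnegative terms $(\sigma\xi-\sigma u)^2$ gives
$$1>N_{K/\Q}\big((\xi-u)^2-dv^2\big)\ \ge\ \prod_\sigma|\sigma d|\,(\sigma v)^2\ =\ |N_{K/\Q}(d)|\cdot N_{K/\Q}(v)^2 .$$

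Finally I would bring in the relative different. Since $v\ne0$, the element $\eta$ generates $L/K$ with minimal polynomial $f(X)=X^2-2uX+(u^2-dv^2)\in\OO_K[X]$, so $f'(\eta)=2v\sqrt d$; note also that $4dv^2\in\OO_K$, since $2u$ and $u^2-dv^2$ are the trace and norm of $\eta$ over $K$ and hence lie in $\OO_K$. By the standard divisibility $\diff(L/K)\mid(f'(\eta))$ of the relative different and the multiplicativity of the relative norm on ideals, $\disc(L/K)=N_{L/K}\,\diff(L/K)$ divides the integral ideal $(4dv^2)$ of $\OO_K$, whence
$$N_{K/\Q}\,\disc(L/K)\ \le\ |N_{K/\Q}(4dv^2)|\ =\ 4^{(K:\Q)}\,|N_{K/\Q}(d)|\,N_{K/\Q}(v)^2\ <\ 4^{(K:\Q)}$$
by the inequality above. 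The step I expect to require the most care is this last chain: one must check that the ideal divisibility is not lost in passing to absolute norms — it is not, since $(4dv^2)\,\disc(L/K)^{-1}$ is an integral ideal and so has norm at least $1$ — and one must keep the conductor–different relation and the several norm identities oriented correctly; everything else is routine bookkeeping.
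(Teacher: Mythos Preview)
Your proof is correct and is essentially the paper's argument written in coordinates rather than intrinsically. The paper completes the square as $N_{L/K}(\xi-\eta)=\tfrac14\big((2\xi-T_{L/K}\eta)^2+N_{L/K}(\eta-\eta^\sigma)\big)$, drops the totally nonnegative first summand, and invokes $\diff(L/K)\mid(\eta-\eta^\sigma)$; since $\eta-\eta^\sigma=2v\sqrt d=f'(\eta)$ in your notation, this is your argument verbatim.
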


\begin{proof} Suppose that $L$ is Euclidean; for every $\xi \in K$ 
we can find $\eta \in \OO_L$  such that $N_{L/\Q}(\xi-\eta) < 1$. 
Let $\sigma$ denote complex conjugation; then
$$ \begin{array}{rcl} 
   N_{L/K}(\xi-\eta) & = & (\xi-\eta)(\xi-\eta^\sigma)             
         =  \xi^2-\xi(\eta + \eta^\sigma)+\eta\eta^\sigma \\  
         & = & \frac14 \left((2\xi-T_{L/K}\eta)^{2} - 
                    (\eta - \eta^{\sigma})^2\right).\\
         & = & \frac14 \left(\left(2\xi-T_{L/K}\eta\right)^{2} +
			N_{L/K}(\eta-\eta^\sigma)\right).
\end{array}$$
Choose $\xi \in K$ with $M(\xi,K) \ge 1$; this implies 
$\eta \ne \eta^\sigma$, because otherwise 
$$  N_{L/\Q}(\xi-\eta) = N_{K/\Q}(\xi-\eta)^2 \ge M(\xi,K) \ge 1.$$   
Therefore, $0 \neq \eta - \eta^\sigma \in \,\diff(L/K)$, i.e. 
$\diff(L/K)|(\eta - \eta^{\sigma})$. Hence
\begin{align*} 
  N_{L/K}(\xi - \eta) &\ge 
  \frac14\left(N_{L/K}(\eta - \eta^{\sigma})\right), \\
  1 > N_{L/\Q}(\xi - \eta) 
  &\ge 4^{-(K:\Q)}N_{K/\Q}\left(N_{L/K}(\eta - \eta^{\sigma})\right)
  \ge 4^{-(K:\Q)}N_{L/\Q}\,\text{diff}\,(L/K),
\end{align*}
and the asserted inequality follows from 
disc$\,(L/K) = N_{L/K}\,$diff$\,(L/K)$.
\end{proof}

\begin{prop}\label{P4}                
Let $L$ be a $CM$-field  with maximal real subfield $K$; if 
$\diff(L/K) \equiv 0 \bmod 2$, then $M(L) \ge M(K)^2$.
\end{prop}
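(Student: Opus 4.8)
The plan is to prove the pointwise inequality $M(\xi,L)\ge M(\xi,K)^2$ for every $\xi\in K$; since $K\subseteq L$ this gives $M(L)\ge\sup_{\xi\in K}M(\xi,L)\ge\sup_{\xi\in K}M(\xi,K)^2=\bigl(\sup_{\xi\in K}M(\xi,K)\bigr)^2=M(K)^2$, the middle equality because $t\mapsto t^2$ is increasing on $[0,\infty)$. So fix $\xi\in K$ and an arbitrary $\eta\in\OO_L$; I must bound $|N_{L/\Q}(\xi-\eta)|$ from below. Writing $\sigma$ for complex conjugation and $t=T_{L/K}\eta=\eta+\eta^\sigma\in\OO_K$, I would start from the identity already used in the proof of Proposition \ref{P3},
$$N_{L/K}(\xi-\eta)=\frac14\bigl((2\xi-t)^2+N_{L/K}(\eta-\eta^\sigma)\bigr).$$

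The next step is to feed in the hypothesis. As in Proposition \ref{P3} one has $\eta-\eta^\sigma\in\diff(L/K)$ (trivially so when $\eta\in\OO_K$), and since $\diff(L/K)\equiv 0\bmod 2$, i.e. $\diff(L/K)\subseteq 2\OO_L$, we may write $\eta-\eta^\sigma=2\rho$ with $\rho\in\OO_L$; note $\rho^\sigma=-\rho$. Then $\eta=\frac12 t+\rho$, so $\frac12 t=\eta-\rho$ lies in $K\cap\OO_L=\OO_K$, and $\rho^2=-N_{L/K}(\rho)\in\OO_K$ as well. Substituting $\eta-\eta^\sigma=2\rho$ into the identity makes the right-hand side collapse, and one finds
$$N_{L/K}(\xi-\eta)=(\xi-\tfrac12 t)^2-\rho^2\in\OO_K,\qquad \tfrac12 t\in\OO_K.$$

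The last step is a totally-positive bookkeeping. Since $L$ is a CM-field, $K$ is totally real, so $(\xi-\frac12 t)^2$ is totally non-negative; and $\rho$ is fixed up to sign by complex conjugation, hence purely imaginary under every embedding of $L$, so $\rho^2$ is totally non-positive, i.e. $-\rho^2$ is totally non-negative. Thus $N_{L/K}(\xi-\eta)=(\xi-\frac12 t)^2+(-\rho^2)$ is a sum of two totally non-negative elements of $\OO_K$, each archimedean conjugate of which is at least the corresponding conjugate of $(\xi-\frac12 t)^2\ge 0$; monotonicity of $N_{K/\Q}$ on totally non-negative elements then gives
$$|N_{L/\Q}(\xi-\eta)|=N_{K/\Q}\bigl((\xi-\tfrac12 t)^2-\rho^2\bigr)\ge N_{K/\Q}(\xi-\tfrac12 t)^2\ge M(\xi,K)^2,$$
the final inequality because $\frac12 t\in\OO_K$. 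Taking the infimum over $\eta\in\OO_L$ yields $M(\xi,L)\ge M(\xi,K)^2$, and the supremum argument above finishes the proof.

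The argument rests on two simple facts: that $\eta-\eta^\sigma\in\diff(L/K)$ (the input already invoked in Proposition \ref{P3}), which combined with the hypothesis puts $\eta-\eta^\sigma$ into $2\OO_L$; and that, once $\eta-\eta^\sigma=2\rho$, the \emph{real part} $\frac12 t=\eta-\rho$ is automatically an algebraic integer of $K$. The step that most needs care is the positivity estimate: one must use that $K$ is totally real and that $\rho$ is totally imaginary, and phrase the norm bound through monotonicity of $N_{K/\Q}$ on totally non-negative elements, which the decomposition $N_{L/K}(\xi-\eta)=(\xi-\frac12 t)^2+(-\rho^2)$ makes transparent. I do not anticipate a serious obstacle beyond this.
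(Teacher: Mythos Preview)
Your proof is correct and follows essentially the same line as the paper's: both reduce to the identity $N_{L/K}(\xi-\eta)=(\xi-\tfrac12 t)^2-\rho^2$ with $\rho=\tfrac12(\eta-\eta^\sigma)$, observe that $\tfrac12 t\in\OO_K$, and use total positivity to drop the $-\rho^2$ term. The only cosmetic difference is that the paper reaches $2\mid T_{L/K}\eta$ directly from the trace characterization of the inverse different, whereas you obtain it via $\eta-\eta^\sigma\in\diff(L/K)\subseteq 2\OO_L$ and the observation $\tfrac12 t=\eta-\rho\in\OO_L\cap K$; your positivity step is also spelled out more carefully than in the paper.
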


This result is best possible: for $L = \Q(\zeta_{12})$ and 
$K = \Q(\sqrt{3}\,)$ we actually have equality since $M(K) = \frac12$
and $M(L) = \frac14$. 

\begin{proof} 
If $\diff(L/K) \equiv 0 \bmod 2$, then  $2 \mid T_{L/K}\eta$. Moreover, 
$T_{L/K}\eta = \eta + \eta^{\sigma} \equiv  \eta - \eta^\sigma \bmod 2$ 
for every $\eta \in \OO_L$. Therefore, 
$N_{L/K}(\xi-\eta) = \left(\xi-\frac12T_{L/K}\eta\right)^2  
-(\eta - \eta^\sigma)^2$ for every $\xi \in K$, and so 
$N_{L/\Q}(\xi-\eta) \ge N_{K/\Q}(\xi-\frac12T_{L/K}\eta)^2 \ge M(\xi,K)^2$. 
This proves the claim.
\end{proof}

\section{Normal quartic CM-fields}

In this section we will prove that if $K$ is a normal quartic Euclidean 
CM-field, then $K$ is one of the fields listed in Theorem 1 or 2.

\subsection{Cyclic Fields} Suppose first that $K$ is a cyclic 
complex quartic number field; if $K$ is Euclidean, its class 
number is $1$ and according to Setzer, its conductor belongs 
to the set $\{5, 13, 16, 29, 37, 53, 61\}$.
         
   The field with conductor $\mathfrak f = 16$ is 
$K = \Q\big(\sqrt{-2+\sqrt{2}}\,\big)$; it has fundamental 
unit $\eps = 1+ \sqrt{2}$. Therefore, the residue class 
$1 + \sqrt{-2+\sqrt{2}} \bmod 2$ does not contain units; since (3) 
is inert in $K/\Q$, it does not contain an element of norm $3$. 
The primes $5, 7, 11, 13$ do not split completely in $K/\Q$, so
there are no elements in $\OO_K \setminus E_K$ with odd norms 
$< 2^4$: this shows that $K$ is not Euclidean.

   Next we apply Proposition 1 to $K/\Q$  with $\p = p\Z$ and with 
the values of $\alpha, \beta$ given in the following table:

$$\begin{array}{l|rrrr} 
 p      & 29 & 37 & 53 & 61 \\ \hline
 \alpha &  6 & 14 & 15 &  4 \\ 
 \beta  & 20 & 10 & 10 & 12 \\
\end{array} $$

   In order to show that $\beta$ is not a norm in $K/\Q$
($\beta-p$ is never norm because norms from $K$ are always positive), 
just notice that $(2/p) = -1$. 

We remark that it is easy to prove Thm. \ref{T1} without making
use of Setzer's results: if a cyclic quartic complex field $L$
has odd class number, then its conductor must be a prime power. 
Since the quadratic fields with prime power discriminant
$> 73$ are not norm Euclidean, Prop. \ref{P3} shows that
any norm Euclidean $L$ with conductor $p > 73$ must 
satisfy $p = N_{K/\Q} \disc(L/K) < 4^2 = 16$. This contradiction
shows that $f \le 73$. Now we compute the class numbers for
the fields in this finite list and continue as above.

\subsection{Bicyclic Fields}
Next we will deal with bicyclic fields. Let $D(m,n)$ denote the 
ring of integers in $\Q(\sqrt{m}, \sqrt{n}\,)$ and suppose that 
$D(m,n)$ is Euclidean. We will distinguish the following cases:

\subsection*{I. $D(m,n)$ contains an ideal of norm $2$}

Since $D(m,n)$ has class number $1$, this ideal of norm $2$ is 
principal. Taking relative norms shows that each of the two complex 
quadratic subfields of $\Q(\sqrt{m}, \sqrt{n}\,)$ contains an 
element of norm $2$. The only such fields are $\Q(\sqrt{-1}\,)$, 
$\Q(\sqrt{-2}\,)$, and $\Q(\sqrt{-7}\,)$, and this leaves 
us with $D(-1, 2)$, $D(-1,7)$ and $D(-2,-7)$.

Let $R = D(-2,-7)$; we know that $2R = (\mathfrak 2_1 \mathfrak 2_2)^2$   
for prime ideals $\mathfrak 2_1,\mathfrak 2_2$  of norm $2$. If $R$ were 
Euclidean, the prime residue classes mod 
$\mathfrak m =\mathfrak 2_1^2\mathfrak 2_2$ would contain elements of odd 
norm $< 8 = N\mathfrak m$. Since the unit group is generated by $-1$ and 
$\eps = 2\sqrt{-2}+\sqrt{-7}$, the congruence 
$-1 \equiv \eps \equiv 1 \bmod 2$ shows that only the residue 
class $1 \bmod \,\mathfrak m$ contains units. Since there are no elements 
of norm $3$, $5$, or $7$ in $R$, this ring is not Euclidean.
 
\subsection*{II. $D(m,n)$ does not contain an ideal of norm $2$}

   This implies that $2$ is inert in one of the quadratic subfields 
of $K$; there are the following possibilities:
\begin{enumerate}
\item[(A)] $2$ is inert in the real subfield and ramified in the 
           complex subfields; \newline
   Let $R = D(m,n)$; we may assume that $m \equiv 2,3 \bmod 4, 
n \equiv 5 \bmod 8$ and $n>0$. At least one of the complex quadratic 
subfields contains an element of norm $2$: otherwise, both subfields 
would have class number $>1$, and since $K$ is ramified over at most 
one of them, $K$ would have non-trivial class number. Therefore, $K$ 
contains $\Q(\sqrt{-1}\,)$ or $\Q(\sqrt{-2}\,)$: the possibility 
$\Q(\sqrt{-7}\,)$ is excluded since we assumed that $m \equiv 2,3 \bmod 4$.
\item[(A.1)] $R =D(-1,n), n \equiv 5 \bmod 8$ \newline
If R is Euclidean, the residue class $\frac{1+\sqrt{-n}}{1+i} \bmod 2$ 
contains an element $\alpha \in R$ such that $N\alpha < 16$. This 
implies that $N_{K/\Q(\sqrt{-n})} \alpha \equiv \sqrt{-n} \bmod 2$. 
Therefore, $\Q(\sqrt{-n}\,)$ contains an element 
$\beta \equiv \sqrt{-n} \bmod 2$ of norm $< 16$. This shows $n<16$, 
i.e. $n \in \{5,\,13\}$.

Let $R = D(-1,13)$; we will apply Proposition 1 with 
$k=\Q(i), K=\Q(i,\sqrt{13}\,), \p=(3+2i), \alpha=2, \beta=4$. 
The only $b \in \Z\lbrack i\rbrack$ satisfying (1) and (3) are 
$b=-1+i$ and $b =1-2i$: since the prime ideals $(1-i)$ and 
$(1-2i)$ remain inert in $K$, these $b$ cannot be norms, and 
we get a contradiction.
\item[(A.2)] $R =D(-2,n), n \equiv 5 \bmod 8$ \newline
We look at the residue class $\alpha \equiv \sqrt{-2n} + \frac12(1+\sqrt n) 
\bmod 2$ instead and find that $\Z\lbrack i\rbrack$ contains an element 
$\equiv 1+\sqrt{-2n}  \bmod 2$ of norm $< 16$. Now $n \equiv 5 \bmod 8$
and $1+2n<16$ imply $n=5$.

\item[(B)] $2$ is inert in a complex subfield and ramified in the 
           real subfield; \newline
Here we may assume that $m \equiv 2,3 \bmod 4, n \equiv 5 \bmod 8$ and 
$n<0$. Apply Proposition 1 with $k=\Q(\sqrt n), K=\Q(\sqrt m,\sqrt n\,), 
\p=2\OO_K, \beta = \frac12(1+\sqrt n)$; note that $\beta$ is a square 
$\mod 2$ since $\OO_K/2\OO_K$  has order $3$. If $D(m,n)$ is Euclidean, 
$\OO_K$ must contain an element $\equiv \beta \bmod 2$ with norm $<4$; 
obviously, $\beta$ is no unit if $n<-3$, and this implies that 
$N_{k/\Q}\beta = 3$. Therefore, $n \in \{-3, -11\}$, and if 
$n=-11$, $\beta$ must be norm of an element in $D(m,n)$ with absolute 
norm $3$. Taking the relative norm to $\Q(\sqrt m\,)$ of this element 
shows that $\Z\lbrack\sqrt m\,\rbrack$ contains an element of norm $3$, 
and this gives $m=-2$. In order to show that $D(-2,-11)$ is not 
Euclidean, we apply Proposition 2 with 
$t=2, \kappa = \frac{6523}{5808}$, and 
$\xi = \xi_1 = \frac{13}{66}\sqrt{-11}(1 - \sqrt{-2}), 
 \eps = 7\sqrt{-2} + 3\sqrt{-11}$. This implies
$\xi\eps^2 \equiv \xi \bmod R$, and $\mu_1 \approx 2.41, 
\mu_2 \approx 1.71, \mu_3 \approx 0.73, \mu_4 \approx 0.52$, 
so only a few values have to be tested.

   We are left with $R=D(m,-3)$. Let $\rho$ be a primitive third 
root of unity, and let $N_m$ denote the relative norm of $K/\Q(\sqrt m)$. 
If there is an element $\alpha \equiv  \rho + \sqrt m \bmod 2$, then 
$N_m \alpha \equiv m+1+\sqrt m \bmod 2$. In case $m \equiv 2 \bmod 4$, 
this implies the existence of an element $\beta \equiv 3+\sqrt{m} \bmod 2$ 
with norm $<16$ in $\Z[\sqrt{m}]$ and this yields $|m| < 16$. The only 
domains with class number 1 among these are $D(2,-3)$ and $D(-2,-3)$. 
Similarly, in case $m \equiv 3 \bmod 4$ we find only $D(-1, -3)$.

\item[(C)] $2$ is unramified in $K$. \newline
Write $R=D(m,n)$ and assume that $m,n<0$. If R is Euclidean, then the 
residue class $ \alpha \equiv \frac12 ( 1+\sqrt m) \bmod 2$  
contains an element of norm $<16$; therefore, one of the classes 
$ \frac12 (1 \pm \sqrt m),\, \frac12 (3 \pm \sqrt m) \bmod 2$ 
contains such an element, and this implies $|m|<64$. Similarly, 
$|n| < 64$, and among the remaining $D(m,n)$, only the following 
have class number 1:

$$\begin{array}{lrll} 
      m = & -3, &n = &-7, -11, -15, -19, -43, -51; \\
      m = & -7, &n = &-11, -19, -35, -43; \\
      m = &-11, &n = &-19.  
\end{array}$$

 Applying Proposition 1 to $K = \Q(\sqrt m,\sqrt n\,)$ and 
$k=\Q(\sqrt n\,)$, we can exclude the following fields:

$$\begin{array}{r|r|ccc}
\rsp      m  &   n           &\alpha     &\p   &b \bmod \p \\ \hline
\rsp     -3  & -43  &\frac12(1+\sqrt{-43})     &(3)       &\sqrt{-43} \\
\rsp    -7  & -11        & 2+\sqrt{-11}       &(7)     &-3\sqrt{-11} \\
\rsp     -7  & -19        &  2  &\frac12(3+\sqrt{-19})       &-3      \\
\rsp     -7  & -43  &\frac12(3+\sqrt{-43})  &(7) &\frac12(-3+3\sqrt{-43}) \\
\rsp    -11  & -19        &  4     &\frac12(5+\sqrt{-19})    &5
\end{array}$$

\end{enumerate}

This takes care of the negative part of Theorem 2. In order to
prove that the fields listed there (as well as a few others, 
cf. \cite{Lem2}) are in fact Euclidean, we used programs written 
in BASIC (partial results have been obtained by Lakein \cite{Lak}).
The algorithms are described in \cite{CL} for the 
case of cubic fields; here is what is known about the Euclidean 
minima of the fields in Thm. \ref{T2}:

\medskip

{\begin{center} {\parbox{10em}{
\begin{tabular}{|r|r|c|} \hline
  m 	&  n 	  	& $M(K)$ \\ \hline
 $-1$ 	& $ -2$ 	& $1/2$  \\
	& $ -3$ 	& $1/4$  \\
 	& $  5$ 	& $5/16$ \\
	& $ -7$ 	& $1/2$  \\ \hline
 $-7$	& $  5$   	& $9/16$ \\ \hline
 $-2$	& $  5$   	& $11/16$ \\ \hline
\end{tabular}} \hfil \parbox[t]{10em}{
\begin{tabular}{|r|r|c|} \hline
  m 	&  n 	& $M(K)$ \\ \hline
 $-3$	& $  2$	& $\ge 1/4$ \\
 	& $ -2$	& $1/3$ \\
	& $  5$	& $1/4$ \\
 	& $ -7$	& $4/9$ \\
 	& $-11$	& $< 0.46$ \\
	& $ 17$	& $13/16$ \\
	& $-19$	& $< 0.95$ \\ \hline
\end{tabular}}} \end{center}}

\enddocument